\newtheorem{theorem}{Theorem}
\newtheorem{corollary}{Corollary}
\newtheorem{lemma}{Lemma}
\newtheorem{proposition}{Proposition}
\newtheorem{definition}{Definition}
\newtheorem{remark}{Remark}
\begin{document}
\title{Gorenstein projective precovers}
\author{Sergio Estrada, Alina Iacob, Katelyn Yeomans}

%
\subjclass[2000]{18G25; 18G35}

\maketitle %

\begin{abstract}
We prove that the class of Gorenstein projective modules is special precovering over any left GF-closed ring such that every Gorenstein projective module is Gorenstein flat and every Gorenstein flat module has finite Gorenstein projective dimension. This class of rings includes (strictly) Gorenstein rings, commutative noetherian rings of finite Krull dimension, as well as right coherent and left n-perfect rings. In section 4 we give examples of left GF-closed rings that have the desired properties (every Gorenstein projective module is Gorenstein flat and every Gorenstein flat has finite Gorenstein projective dimension) and that are not right coherent.
\end{abstract}

\medskip\noindent
{\footnotesize\noindent{\bf Key words and phrases.} Gorenstein projective, Gorenstein flat, precover.}

\section{introduction}

The class of Gorenstein projective modules is one of the key elements in Gorenstein homological algebra. So it is natural to consider the question of the existence of the Gorenstein projective precovers.\\
The existence of the Gorenstein projective prevovers over Gorenstein rings is known (Enochs-Jenda, 2000). Then J{\o}rgensen proved their existence over commutative noetherian rings with dualizing complexes (2007). More recently (2011), Murfet and Salarian proved the existence of the Gorenstein projective precovers over commutative noetherian rings of finite Krull dimension. In \cite{iacob:14:gor.proj.precovers} we extended their result: we proved that if $R$ is a right coherent and left n-perfect ring, then the class of Gorenstein projective complexes is special precovering in the category of unbounded complexes, $Ch(R)$. As a corollary we obtained the existence of the special Gorenstein projective precovers in $R-Mod$ over the same type of rings. 

We prove that the class of Gorenstein projective modules is special precovering over any left GF-closed ring $R$ such that every Gorenstein projective module is Gorenstein flat and every Gorenstein flat module has finite Gorenstein projective dimension. This class of rings includes that of right coherent and left n-perfect rings. But the inclusion is a strict one: in section 4 we give examples of left GF-closed rings that have the desired properties (every Gorenstein projective is Gorenstein flat and every Gorenstein flat has finite Gorenstein projective dimension), and that are \textbf{not} right coherent.

\section{preliminaries}
Throughout the paper $R$ will denote an associative ring with identity. Unless otherwise stated, by \emph{module} we mean \emph{left} $R$-module.

We will denote by $Proj$ the class of all projective modules.
We recall that an $R$-module $M$ is Gorenstein projective if there exists an exact and $Hom(-, Proj)$ exact complex of projective modules\\ $\textbf{P}= \ldots \rightarrow P_1 \rightarrow P_0 \rightarrow P_{-1} \rightarrow \ldots $ such that $M = Ker (P_0 \rightarrow P_{-1})$.\\
We will use the notation $\mathcal{GP}$ for the class of Gorenstein projective modules.\\

We also recall the definitions for Gorenstein projective precovers, covers, and special precovers. \\
\begin{definition} A homomorphism $\phi: G \rightarrow M$ is a Gorenstein projective precover of $M$ if $G$ is Gorenstein projective and if for any Gorenstein projective module $G'$ and any $\phi' \in Hom(G', M)$ there exists $u \in Hom(G', G)$ such that $\phi' = \phi u$. \\
A Gorenstein projective precover $\phi$ is said  to be a cover if any $v \in End_R(G)$ such that $\phi v = \phi$ is an automorphism of $G$.\\
A Gorenstein projective precover $\phi$ is said to be \textbf{special} if $Ker (\phi) $ is in the right orthogonal class of that of Gorenstein projective modules,\\ $\mathcal{GP} ^\bot = \{L, Ext^1(G',L)=0 $ for all Gorenstein projective modules $G' \}$.
\end{definition}

The importance of the Gorenstein projective (pre)covers comes from the fact that they allow defining the Gorenstein projective resolutions: if the ring $R$ is such that every $R$-module $M$ has a Gorenstein projective precover then for every $M$ there exists a $Hom(\mathcal{GP}, -)$ exact complex $\ldots \rightarrow G_1 \rightarrow G_0 \rightarrow M \rightarrow 0$ with $G_0 \rightarrow M$ and $G_i \rightarrow Ker( G_{i-1} \rightarrow G_{i-2})$ Gorenstein projective precovers. Such a complex is called a Gorenstein projective resolution of $M$; it is unique up to homotopy so it can be used to compute right derived functors of $Hom$.\\


We also use Gorenstein flat modules. They are defined in terms of the tensor product:\\
\begin{definition}
A module $G$ is Gorenstein flat if there exists an exact complex of flat modules  ${\rm {\bf F }}= \ldots \rightarrow F_1 \rightarrow F_0 \rightarrow F_{-1} \rightarrow \ldots $ such that $I \otimes {\rm {\bf F }}$ is still exact for any injective right $R$-module $I$, and such that $G = Ker (F_0 \rightarrow F_{-1})$.\\
\end{definition}

We will use $\mathcal{GF}$ to denote the class of Gorenstein flat modules.\\
The Gorenstein flat precovers, covers and resolutions are defined in a similar manner to that of the Gorenstein projective ones (simply replace $\mathcal{GP}$ with $\mathcal{GF}$ in the definition).\\

\section{main result}

We recall that a ring $R$ is left GF-closed if the class of Gorenstein flat left $R$-modules is closed under extensions. In this case, $\mathcal{GF}$ is a projectively resolving class (by \cite{bennis:07:stronglygorenstein}).

Our main result is the existence of special Gorenstein projective precovers over any left GF-closed ring such that every Gorenstein projective is Gorenstein flat and every Gorenstein flat has finite Gorenstein projective dimension. \\

We will use the following:\\

\begin{proposition}
Every module of finite Gorenstin projective dimension has a special Gorenstein projective precover.
\end{proposition}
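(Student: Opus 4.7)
The plan is to reduce to a single short exact sequence of the Auslander--Buchweitz type: produce a short exact sequence $0 \to K \to G \to M \to 0$ with $G$ Gorenstein projective and $K$ of finite projective dimension, and then verify that $K$ lies in $\mathcal{GP}^\perp$ so that the epimorphism $G \to M$ is, by definition, a special Gorenstein projective precover of $M$.

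For the first step I would invoke the standard characterization (due to Holm) of modules of finite Gorenstein projective dimension: if $Gpd(M) = n < \infty$, then there exists a short exact sequence
\[
0 \to K \to G \to M \to 0
\]
with $G$ Gorenstein projective and $pd(K) \leq n-1$ (interpreting the case $n = 0$ as $K = 0$). This is the classical ``approximation'' statement that replaces $M$ by a Gorenstein projective cover at the cost of a module of strictly smaller (ordinary) projective dimension.

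For the second step I would check that any module $L$ of finite projective dimension lies in $\mathcal{GP}^\perp$, and in fact satisfies $Ext^i(G',L) = 0$ for every Gorenstein projective $G'$ and every $i \geq 1$. When $L$ is projective this is immediate from the definition of Gorenstein projective, since the defining totally acyclic complex $\mathbf{P}$ of $G'$ is $Hom(-,Proj)$-exact. For general $L$ with $pd(L) = d$, I would argue by induction on $d$: pick a short exact sequence $0 \to L' \to P \to L \to 0$ with $P$ projective and $pd(L') = d-1$, apply $Hom(G',-)$, and dimension-shift in the resulting long exact sequence, using the inductive hypothesis on $L'$ and the projective case on $P$.

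Combining the two steps, the module $K$ from the first short exact sequence belongs to $\mathcal{GP}^\perp$, so the map $G \to M$ is surjective, has Gorenstein projective source, and has kernel in $\mathcal{GP}^\perp$. By the definition of special precover recalled in the preliminaries, this is exactly a special Gorenstein projective precover of $M$. The only non-trivial input is Holm's Auslander--Buchweitz-style approximation for modules of finite Gorenstein projective dimension; once that is cited, the remaining verification that $\{L : pd(L)<\infty\} \subseteq \mathcal{GP}^\perp$ is a routine dimension-shift.
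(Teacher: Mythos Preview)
Your proposal is correct and follows essentially the same line as the paper: produce a short exact sequence $0 \to K \to G \to M \to 0$ with $G$ Gorenstein projective and $K$ of finite projective dimension, then observe that finite projective dimension forces $K \in \mathcal{GP}^\perp$. The only difference is packaging: you cite Holm's approximation theorem as a black box, whereas the paper constructs the approximating sequence explicitly by taking a partial projective resolution $0 \to C \to P_{d-1} \to \cdots \to P_0 \to M \to 0$ with $C$ Gorenstein projective, building a projective coresolution $0 \to C \to T_{d-1} \to \cdots \to T_0 \to D \to 0$ of $C$ ending in a Gorenstein projective $D$, and then forming the mapping cone of the comparison map to obtain $0 \to W \to P_0 \oplus D \to M \to 0$ with $W$ of finite projective dimension. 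This is exactly the standard proof of the Holm result you invoke, so the two arguments are the same in substance.
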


\begin{proof}
Let $G$ be a module of finite Gorenstein projective dimension, $G.p.d. G = d < \infty$. Then there exists an exact sequence $0 \rightarrow C \rightarrow P_{d-1} \rightarrow \ldots \rightarrow P_0 \rightarrow G \rightarrow 0$ with all $P_i$ projective modules and with $C$ Gorenstein projective.\\
Since $C \in \mathcal{GP}$ there is an exact and $Hom(-, Proj)$ exact sequence $0 \rightarrow C \rightarrow T_{d-1} \rightarrow \ldots \rightarrow T_0 \rightarrow D \rightarrow 0$ with each $T_j$ projective and with $D$ a Gorenstein projective module.\\
The fact that each $P_i$ is projective allows constructing a commutative diagram:\\

\[
\begin{diagram}
\node{0}\arrow{e}\node{C}\arrow{s,=}\arrow{e}\node{T_{d-1}}\arrow{s}\arrow{e}\node{\cdots}\arrow{e}\node{T_0}\arrow{s}\arrow{e}\node{D}\arrow{s}\arrow{e}\node{0}\\
\node{0}\arrow{e}\node{C}\arrow{e}\node{P_{d-1}}\arrow{e}\node{\cdots}\arrow{e}\node{P_0}\arrow{e}\node{G}\arrow{e}\node{0}
\end{diagram}
\]

This gives an exact sequence $ 0 \rightarrow T_{d-1} \rightarrow P_{d-1} \oplus T_{d-2} \rightarrow \ldots \rightarrow P_1 \oplus T_0 \rightarrow P_0 \oplus D \xrightarrow{\delta} G \rightarrow 0$. \\
Let $W=Ker(\delta)$. The exact sequence $ 0 \rightarrow T_{d-1} \rightarrow P_{d-1} \oplus T_{d-2} \rightarrow \ldots \rightarrow P_1 \oplus T_0 \rightarrow W \rightarrow 0$ with all $T_i$ and all $P_j$ projective modules gives that $W$ has finite projective dimension, so $W \in \mathcal{GP}^\bot$. The exact sequence $0 \rightarrow W \rightarrow P_0 \oplus D \xrightarrow{\delta} N \rightarrow 0$ with $P_0 \oplus D$ Gorenstein projective and with $W \in \mathcal{GP}^\bot$ shows that $\delta$ is a special Gorenstein projective precover.
\end{proof}

Our main result is the following:\\

\begin{theorem}
Let $R$ be a left GF-closed ring. If every Gorenstein projective module is Gorenstein flat and every Gorenstein flat $R$-module has finite Gorenstein projective dimension then the class of Gorenstein projective modules is special precovering in $R-Mod$.
\end{theorem}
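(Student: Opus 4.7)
The plan is to reduce an arbitrary module $M$ to the situation already handled by Proposition 1. It suffices to fit $M$ into a short exact sequence $0 \to K \to F \to M \to 0$ in which $F$ has finite Gorenstein projective dimension and $K \in \mathcal{GP}^\perp$. Indeed, applying Proposition 1 to $F$ produces a special Gorenstein projective precover $0 \to L \to G \xrightarrow{\phi} F \to 0$ with $L \in \mathcal{GP}^\perp$, and the composition $\pi\phi : G \to F \to M$ is surjective with kernel $N = \phi^{-1}(K)$ fitting into a short exact sequence $0 \to L \to N \to K \to 0$. Since $\mathcal{GP}^\perp$ is always closed under extensions, $N \in \mathcal{GP}^\perp$, and $\pi\phi$ is the required special Gorenstein projective precover.

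\textbf{Building the reduction sequence.} To produce the sequence $0 \to K \to F \to M \to 0$ we invoke a special Gorenstein flat precover of $M$. Under the left GF-closed hypothesis, Bennis shows that $\mathcal{GF}$ is projectively resolving and (together with closure under direct sums and summands and a small-cogenerator/deconstructibility argument) that the cotorsion pair $(\mathcal{GF}, \mathcal{GF}^\perp)$ is complete. Hence every $M$ admits an exact sequence $0 \to K \to F \to M \to 0$ with $F \in \mathcal{GF}$ and $K \in \mathcal{GF}^\perp$. By the hypothesis that every Gorenstein flat module has finite Gorenstein projective dimension, $F$ qualifies for Proposition 1. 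Moreover, the inclusion $\mathcal{GP} \subseteq \mathcal{GF}$ gives $\mathcal{GF}^\perp \subseteq \mathcal{GP}^\perp$, so $K \in \mathcal{GP}^\perp$. Both requirements of the strategy are thus met, and splicing as in the previous paragraph completes the proof.

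\textbf{Main obstacle.} The substantive step is the existence of a \emph{special} Gorenstein flat precover (equivalently, the completeness of the cotorsion pair $(\mathcal{GF}, \mathcal{GF}^\perp)$) from only the GF-closed assumption; the remaining splicing and diagram chase are routine. This completeness should follow from exhibiting a set of Gorenstein flat cogenerators, in the spirit of Enochs--L\'opez-Ramos' treatment of Gorenstein flat covers over coherent rings, adapted to the GF-closed setting using Bennis' projectively resolving property of $\mathcal{GF}$. An alternative, which avoids the cotorsion-pair language altogether, is to construct $0 \to K \to F \to M \to 0$ directly: take a flat cover $F_0 \to M$ with cotorsion kernel, then iterate/dimension-shift using the hypothesis that Gorenstein flats have finite Gorenstein projective dimension to upgrade the cotorsion kernel into one lying in $\mathcal{GP}^\perp$.
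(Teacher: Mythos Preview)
Your strategy is exactly the paper's: take a special Gorenstein flat precover $0\to K\to F\to M\to 0$ (so $F\in\mathcal{GF}$ has finite Gorenstein projective dimension and $K\in\mathcal{GF}^\perp\subseteq\mathcal{GP}^\perp$), apply Proposition~1 to $F$, and splice; the paper phrases the splice as a pullback, but it is the same construction.

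The only point where you hesitate is the one the paper dispatches by citation: over a left GF-closed ring the class $\mathcal{GF}$ is covering by Yang--Liu \cite{yangliu}, and since $\mathcal{GF}$ is closed under extensions, Wakamatsu's lemma forces the kernel of a Gorenstein flat \emph{cover} to lie in $\mathcal{GF}^\perp$. So the ``main obstacle'' you flag is already settled in the literature, and no ad hoc deconstructibility argument or iterated flat-cover construction is needed.
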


\begin{proof}
Let $_RX$ be any left $R$-module. Since $R$ is left GF-closed, the class of Gorenstein flat modules is covering in $R-Mod$ (\cite[Corollary 3.5]{yangliu}). So there exists an exact sequence $0 \rightarrow Y \rightarrow N \rightarrow X \rightarrow 0$ with $N$ Gorenstein flat and with $Y \in \mathcal{GF}^\bot \subset \mathcal{GP}^\bot$ (because we have that $\mathcal{GP} \subset \mathcal{GF}$). Since $N$ has finite Gorenstein projective dimension, by Proposition 1, there is an exact sequence $0 \rightarrow W \rightarrow T \rightarrow N \rightarrow 0$ with $T$ Gorenstein projective and $W \in \mathcal{GP}^\bot$.

Form the pull back diagram:\\

\[
\begin{diagram}
\node{}\node{0}\arrow{s}\node{0}\arrow{s}\\
\node{}\node{W}\arrow{s}\arrow{e,=}\node{W}\arrow{s}\\
\node{0}\arrow{e}\node{A}\arrow{s}\arrow{e}\node{T}\arrow{s}\arrow{e}\node{X}\arrow{s,=}\arrow{e}\node{0}\\
\node{0}\arrow{e}\node{Y}\arrow{e}\node{N}\arrow{e}\node{X}\arrow{e}\node{0}
\end{diagram}
\]

The exact sequence $0 \rightarrow W \rightarrow A \rightarrow Y \rightarrow 0$ with $W$, $Y \in \mathcal{GP}^\bot$ gives $A \in \mathcal{GP}^\bot$. So we have an exact sequence $0 \rightarrow A \rightarrow T \rightarrow X \rightarrow 0$ with $T \in \mathcal{GP}$ and $A \in \mathcal{GP}^\bot$. It follows that $T \rightarrow X$ is a special Gorenstein projective precover of $X$.

\end{proof}

\begin{corollary}
Let $R$ be a left GF-closed ring such that $\mathcal{GP} \subseteq \mathcal{GF}$ and every Gorenstein flat module has finite Gorenstein projective dimension. Then $(\mathcal{GP}, \mathcal{GP}^\bot)$ is a complete hereditary cotorsion pair.
\end{corollary}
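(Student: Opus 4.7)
The plan is to derive the corollary from Theorem~1 in three short stages: verify the cotorsion pair identity, upgrade special precovers to special preenvelopes via Salce's lemma, and confirm the hereditary property from the projectively resolving character of $\mathcal{GP}$.

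\textbf{Cotorsion pair.} First I would establish $\mathcal{GP} = {}^{\bot}(\mathcal{GP}^{\bot})$. The inclusion ``$\subseteq$'' is immediate from the definition of $\mathcal{GP}^{\bot}$. For the reverse inclusion, pick $M \in {}^{\bot}(\mathcal{GP}^{\bot})$ and apply Theorem~1 to obtain a short exact sequence $0 \to A \to T \to M \to 0$ with $T \in \mathcal{GP}$ and $A \in \mathcal{GP}^{\bot}$. Since $Ext^{1}(M,A) = 0$, this sequence splits, so $M$ is a direct summand of $T$. Closure of $\mathcal{GP}$ under direct summands (a general result of Holm, valid over any ring) then gives $M \in \mathcal{GP}$.

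\textbf{Completeness.} Theorem~1 already provides special $\mathcal{GP}$-precovers, so it remains to produce special $\mathcal{GP}^{\bot}$-preenvelopes. Here I would invoke Salce's lemma: given $M$, embed it in an injective module $E$ (which automatically lies in $\mathcal{GP}^{\bot}$), apply Theorem~1 to the quotient $E/M$ to obtain a special precover sequence $0 \to B \to G \to E/M \to 0$, and form the pullback of $E \twoheadrightarrow E/M$ along $G \twoheadrightarrow E/M$. The pullback module $L$ fits into both a row $0 \to M \to L \to G \to 0$ and a column $0 \to B \to L \to E \to 0$; the column exhibits $L$ as an extension of two modules in $\mathcal{GP}^{\bot}$ and hence places it in $\mathcal{GP}^{\bot}$, while the row is the desired special preenvelope.

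\textbf{Hereditary.} Finally, I would verify that $Ext^{i}(G,L) = 0$ for every $i \ge 1$, every $G \in \mathcal{GP}$, and every $L \in \mathcal{GP}^{\bot}$. Because $\mathcal{GP}$ is projectively resolving (Holm), any short exact sequence $0 \to K \to P \to G \to 0$ with $P$ projective has $K \in \mathcal{GP}$, and the dimension-shifting isomorphism $Ext^{i+1}(G,L) \cong Ext^{i}(K,L)$ then kills the higher $Ext$'s by induction on $i$. The heavy lifting has already been packaged into Theorem~1; the remainder is a formal assembly using Holm's standard closure properties of $\mathcal{GP}$ together with Salce's lemma, so I do not anticipate any substantive obstacle.
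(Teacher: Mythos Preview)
Your proposal is correct and follows essentially the same route as the paper: both use Theorem~1 to get the special precover sequence and split it to establish $\mathcal{GP} = {}^{\bot}(\mathcal{GP}^{\bot})$, and both invoke Holm's projectively resolving property of $\mathcal{GP}$ for the hereditary part. The only difference is that the paper records completeness in one line (``complete by Theorem~1''), whereas you spell out the standard Salce argument for the preenvelopes; this is extra detail rather than a different idea.
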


\begin{proof}
- We prove first that $(\mathcal{GP}, \mathcal{GP}^\bot)$ is a cotorsion pair.\\
Let $X \in ^\bot\!\! (\mathcal{GP}^\bot)$. By Theorem 1 there exists an exact sequence $0 \rightarrow A \rightarrow B \rightarrow X \rightarrow 0$ with $B$ Gorenstein projective and with $A \in \mathcal{GP}^\bot$. Then $Ext^1(X,A)=0$, so the sequence is split exact. Since $B \simeq A \oplus X$ it follows that $X$ is Gorenstein projective. Thus $^\bot (\mathcal{GP}^\bot) = \mathcal{GP}$.\\
- The pair $(\mathcal{GP}, \mathcal{GP}^\bot)$ is complete by Theorem 1.\\
- Since the class of Gorenstein projective modules is projectively resolving (\cite[Theorem 2.5]{holm04}) the pair $(\mathcal{GP}, \mathcal{GP}^\bot)$ is hereditary.

\end{proof}

\textbf{Examples} of left GF-closed rings such that every Gorenstein projective is Gorenstein flat and every Gorenstein flat has finite Gorenstein projective dimension:\\
1. Gorenstein rings\\
2. commutative noetherian rings of finite Krull dimension: by \cite{christensen:06:gorenstein} over such a ring every Gorenstein projective is Gorenstein flat and every Gorenstein flat has finite Gorenstein projective dimension\\

We prove next that every right coherent and left n-perfect ring also satisfies Theorem 1.\\

We recall that a ring $R$ is right coherent if every direct product of flat left $R$-modules is a flat module. We consider right coherent rings such that every flat left $R$-module has finite projective dimension. In this case there exists an integer $n \ge 0$ such that $pd_R F \le n$ for any flat $R$-module $F$. Such a ring $R$ is called a left n-perfect ring.\\

In order to prove that every such ring satisfies the hypotheses of Theorem 1, we will need to argue that over these rings every Gorenstein flat module has finite Gorenstein projective dimension.\\

We give an equivalent characterization below (Proposition 2) of the condition that $G.p.d. G < \infty$ for any Gorenstein flat module $G$. It uses the following Lemma:\\

\begin{lemma}
Let $R$ be a left n-perfect ring. If $F$ is a flat $R$-module then there exists an exact sequence $0 \rightarrow F \rightarrow S^0 \rightarrow S^1 \rightarrow \ldots \rightarrow S^n \rightarrow 0$ with all $S^j$ flat and cotorsion modules.
\end{lemma}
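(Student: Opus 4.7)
The plan is to iterate the Bican--El Bashir--Enochs complete flat cotorsion pair $(\F, \text{Cot})$. Applying a special cotorsion preenvelope to $F$ yields an exact sequence $0 \to F \to S^0 \to F^1 \to 0$ with $S^0$ cotorsion and $F^1$ flat; since flat modules are closed under extensions and both $F$ and $F^1$ are flat, $S^0$ is also flat. Iterating this with $F^1$ in place of $F$, and continuing $n$ times, I splice the resulting short exact sequences $0 \to F^k \to S^k \to F^{k+1} \to 0$ (with $F^0 = F$) into
$$0 \to F \to S^0 \to S^1 \to \cdots \to S^{n-1} \to F^n \to 0,$$
in which every $S^k$ is flat and cotorsion and $F^n$ is flat.

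The remaining task is to show that $F^n$ is itself cotorsion, so that $S^n := F^n$ completes the desired sequence. This will rest on two ingredients. First, for every flat $G$ and every cotorsion $C$, I claim $Ext^i(G,C)=0$ for all $i \ge 1$; the base case $i=1$ is the definition of cotorsion, and for $i \ge 2$ I take a projective presentation $0 \to K \to P \to G \to 0$, note that $K$ is again flat (since syzygies of flat modules are flat), and dimension-shift. Second, the left $n$-perfect hypothesis gives $pd_R\,G \le n$ for every flat $G$, hence $Ext^{n+1}(G, F)=0$.

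Combining these, the long exact $Ext(G,-)$-sequence applied to each $0 \to F^k \to S^k \to F^{k+1} \to 0$ collapses, by the first ingredient, to $Ext^i(G, F^{k+1}) \cong Ext^{i+1}(G, F^k)$ for all $i \ge 1$; iterating $n$ times yields $Ext^1(G, F^n) \cong Ext^{n+1}(G, F) = 0$ for every flat $G$, so $F^n$ is cotorsion. The main obstacle I anticipate is pinpointing exactly where the $n$-perfect hypothesis must enter: the construction of the sequence is purely formal from the complete flat cotorsion pair, and without a bound on $pd$ of flat modules there is no a priori reason for the chain to terminate in a cotorsion module after finitely many steps. Everything reduces to the higher-$Ext$ vanishing lemma above, which is the mechanism that converts the bound on $pd(G)$ into cotorsion-ness of $F^n$.
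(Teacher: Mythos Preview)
Your proof is correct and follows essentially the same route as the paper: iterate the complete cotorsion pair $(\mathrm{Flat},\mathrm{Cotorsion})$ to build the sequence, then use the $n$-perfect bound $pd_R G\le n$ together with dimension-shifting through the flat--cotorsion intermediates $S^k$ to conclude $Ext^1(G,F^n)\cong Ext^{n+1}(G,F)=0$. The only difference is cosmetic: you spell out the auxiliary vanishing $Ext^i(G,C)=0$ for flat $G$ and cotorsion $C$ (via flat syzygies) as a separate lemma, whereas the paper invokes the shift $Ext^{n+1}(K,F)\simeq Ext^1(K,F^n)$ directly.
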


\begin{proof}
Since $(Flat, Cotorsion)$ is a complete cotorsion pair, there exists a short exact sequence $0 \rightarrow F \rightarrow S^0 \rightarrow F^0 \rightarrow 0$ with $S^0$ cotorsion and with $F^0$ a flat module. And since $F$ is flat, it follows that $S^0 $ is both flat and cotorsion. Similarly, there exists an exact sequence $0 \rightarrow F^0 \rightarrow S^1 \rightarrow F^1 \rightarrow 0$ with $S^1$ flat and cotorsion and $F^1$ flat. Continuing, we obtain an exact sequence $0 \rightarrow F \rightarrow S^0 \rightarrow \ldots \rightarrow S^{n-1} \rightarrow F^n \rightarrow 0$ with all $S^i$ flat and cotorsion and with $F^n$ flat. \\
We show that $F^n$ is also cotorsion. Let $K$ be a flat module. Since $pd_R K \le n$, we have that $Ext^{n+1}(K,F)=0$. And since all $S^i$ are flat and cotorsion, we have that $Ext^{n+1}(K,F) \simeq Ext^1(K,F^n)$. So $Ext^1(K,F^n) = 0$ for all flat $R$-modules $K$, therefore $F^n$ is cotorsion.
\end{proof}

We can prove now:\\
\begin{proposition}
Let $R$ be a left GF-closed and left n-perfect ring. The following are equivalent:\\
1. $Gpd_R G \le n$ for any Gorenstein flat module $G$.\\
2. $Gpd_R G <\infty$ for any Gorenstein flat module $G$.\\
3. $Ext^i(G,F) = 0$ for any Gorenstein flat module $G$, any flat and cotorsion module $F$ and all $i \ge 1$

\end{proposition}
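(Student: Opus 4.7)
The plan is to establish the cycle $(1) \Rightarrow (2) \Rightarrow (3) \Rightarrow (1)$; the implication $(1) \Rightarrow (2)$ is immediate from $n < \infty$. For $(2) \Rightarrow (3)$, I start with a Gorenstein flat $G$ of finite Gorenstein projective dimension $d$ and invoke Holm's characterization: the $d$-th syzygy $K$ in any projective resolution of $G$ is then Gorenstein projective. The $Hom(-, Proj)$-exactness of a complete projective resolution of $K$, combined with dimension-shifting in the second variable through a finite projective resolution of any module $L$ of finite projective dimension, yields $Ext^i(K, L) = 0$ for all $i \ge 1$. Since $R$ is left $n$-perfect, every flat cotorsion module $F$ has $pd_R F \le n$, so $Ext^i(K, F) = 0$; shifting back from $K$ to $G$ through the first $d$ projective syzygies of $G$ then gives $Ext^i(G, F) \cong Ext^{i+d}(K, F) = 0$ for every $i \ge 1$.

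For $(3) \Rightarrow (1)$, I take a Gorenstein flat $G$ and set $K := \Omega^n G$, the $n$-th syzygy in a projective resolution. In a left GF-closed ring, $K$ is again Gorenstein flat (the $n$-th flat syzygy of a Gorenstein flat module remains Gorenstein flat). The first step is to harvest enough Ext vanishing. Applied to any projective $P$, Lemma 1 provides a flat cotorsion resolution $0 \to P \to S^0 \to \cdots \to S^n \to 0$, and since (3) gives $Ext^i(G, S^k) = 0$ for every $i \ge 1$ and every $k$, iterated dimension-shifting through this resolution yields $Ext^j(G, P) = 0$ for all $j \ge n+1$. Dimension-shifting further in the second variable through a finite projective resolution extends this to $Ext^j(G, L) = 0$ for every $L$ of finite projective dimension and every $j \ge n+1$, and transferring through the $n$ projective syzygies of $G$ gives $Ext^i(K, L) = 0$ for all such $L$ and all $i \ge 1$.

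The remaining and main step is to upgrade this Ext vanishing to the assertion that $K$ is Gorenstein projective, for which one must produce a $Hom(-, Proj)$-exact right projective coresolution $0 \to K \to T^0 \to T^1 \to \cdots$. The strategy is to exploit that $K$ embeds into a flat module $F_{-1}$ coming from its complete flat resolution; by $n$-perfectness, $F_{-1}$ admits a finite projective resolution $0 \to Q_n \to \cdots \to Q_0 \to F_{-1} \to 0$, and the obstruction to lifting the embedding $K \hookrightarrow F_{-1}$ to an injection $K \hookrightarrow Q_0$ lies in $Ext^1(K, \ker(Q_0 \to F_{-1}))$, which vanishes since the kernel has finite projective dimension. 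Iterating this lifting construction produces the coresolution, and the $Hom(-, Proj)$-exactness is automatic from the established vanishing $Ext^i(K, Q) = 0$ for projective $Q$ and $i \ge 1$. The hardest part will be verifying that the iteration closes up correctly, i.e., tracking how the relevant Ext vanishing propagates to each successive cokernel (which is an extension of a Gorenstein flat module by a module of finite projective dimension) so that the process continues indefinitely.
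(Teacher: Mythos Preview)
Your argument for $(2)\Rightarrow(3)$ contains a genuine error: the dimension shift goes the wrong way. If $K=\Omega^d G$ is the $d$-th projective syzygy of $G$, then the correct identification is $Ext^i(K,F)\cong Ext^{\,i+d}(G,F)$, not $Ext^i(G,F)\cong Ext^{\,i+d}(K,F)$. Hence from $Ext^i(K,F)=0$ you only obtain $Ext^j(G,F)=0$ for $j\ge d+1$, and nothing about the low degrees $1\le j\le d$. The paper closes this gap with a periodicity trick: every Gorenstein flat $G'$ is a direct summand of a \emph{strongly} Gorenstein flat module $G$, for which there is a short exact sequence $0\to G\to K\to G\to 0$ with $K$ flat. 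Since $(\mathrm{Flat},\mathrm{Cotorsion})$ is hereditary, $Ext^i(K,F)=0$ for all $i\ge 1$, so the connecting maps give isomorphisms $Ext^i(G,F)\cong Ext^{i+1}(G,F)$ for all $i\ge 1$. Now the high-degree vanishing you did establish forces $Ext^i(G,F)=0$ for every $i\ge 1$.

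Your plan for $(3)\Rightarrow(1)$ is in a different spirit from the paper's and, as you yourself flag, the iteration is the crux---and it does not close up as written. After the first lift you obtain $0\to K\to Q_0\to C^0\to 0$ with $Q_0$ projective, and indeed $C^0$ sits in $0\to M\to C^0\to G'\to 0$ with $pd\,M<\infty$ and $G'$ Gorenstein flat. But from $0\to K\to Q_0\to C^0\to 0$ you only get $Ext^j(C^0,L)=0$ for $j\ge 2$; the crucial $Ext^1(C^0,L)$ need not vanish, because $G'$ is merely Gorenstein flat, not an $n$-th syzygy, so you cannot invoke the vanishing you proved for $K$. Without $Ext^1(C^0,-)=0$ on modules of finite projective dimension, the next lifting step has an obstruction. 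The paper avoids this by working at the level of complexes: it takes a length-$n$ projective resolution $0\to C\to P_{n-1}\to\cdots\to P_0\to N\to 0$ in $Ch(R)$ of the entire $F$-totally acyclic complex $N$, so that \emph{every} cycle $Z_j(C)$ is simultaneously an $n$-th syzygy of a Gorenstein flat module. This uniformity is exactly what lets one use (3), Lemma~1, and a shift along the exact complex $C$ of projectives to conclude $Ext^i(Z_j(C),F)=0$ for all flat $F$, whence $C$ is totally acyclic and $Z_j(C)\in\mathcal{GP}$.
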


\begin{proof}
1. $\Rightarrow$ 2. is immediate.\\
 2. $\Rightarrow$ 3. Let $F$ be flat and cotorsion and let $G'$ be a Gorenstein flat $R$-module. Then there exists a strongly Gorenstein flat module $G$ such that $G'$ is a direct summand of $G$ (by \cite{bennis:07:stronglygorenstein}). Since there exists an exact sequence $0 \rightarrow G \rightarrow K \rightarrow G \rightarrow 0$ with $K$ flat it follows that $Ext^i(G,F) \simeq Ext^1(G,F) $ for all $i \ge 1$. And since $Gpd_R G < \infty$ and $Flat \subset \mathcal{GP}^\bot$, there exists $l$ such that $Ext^j(G,F) = 0$ for any $j \ge l+1$. By the above, $Ext^i(G,F) = 0$ for all $i \ge 1$. Since $Ext^i(G', F)$ is a direct summand of $Ext^i(G,F)=0$ it follows that $Ext^i(G',F)=0$ for all $i \ge 1$.\\

3. $\Rightarrow$ 1. Let $G \in \mathcal{GF}$. Then there exists an N-totally acyclic complex $N$ such that $G = Z_0(N)$. Consider a partial projective resolution of $N$: $0 \rightarrow C \rightarrow P_{n-1} \rightarrow \ldots \rightarrow P_0 \rightarrow N \rightarrow 0$. Then $C$ is an exact complex. Since for each j we have an exact sequence $0 \rightarrow C_j \rightarrow P_{{n-1},j} \rightarrow \ldots \rightarrow P_{0,j} \rightarrow N_j \rightarrow 0$ with all $P_{i,j}$ projective and since $pd_RN_j \le n$ it follows that $C_j$ is projective for all j.\\
Also for each j there is an exact sequence $0 \rightarrow Z_j(C) \rightarrow Z_j(P_{n-1}) \rightarrow \ldots \rightarrow Z_j(P_0) \rightarrow Z_j(N) \rightarrow 0$. Since $Z_j(N)$ is Gorenstein flat and $Z_j(P_i)$ is projective for all i and since the ring $R$ is left GF-closed, it follows that $Z_j(C)$ is Gorenstein flat for all i.\\
We show that $C$ is $Hom(-, Flat)$ exact, and so all $Z_j(C)$ are Gorenstein projective modules.\\
Let $F$ be a flat module. Since $R$ is left n-perfect there exists an exact sequence $0 \rightarrow F \rightarrow S^0 \rightarrow \ldots \rightarrow S^n \rightarrow 0$ with all $S^i$ flat and cotorsion modules. By the hypothesis, we have $Ext^i(Z_j(C), S^t)=0$ for all i, all j, and all $0 \le t \le n$. Then $Ext^{i+n}(Z_j(C),F)=0$ for all $i \ge 1$. Since $C$ is a complex of projective modules there is also an exact sequence $0 \rightarrow Z_{j+n}(C) \rightarrow C_{j+n} \rightarrow \ldots \rightarrow C_{j+1} \rightarrow Z_j(C) \rightarrow 0$ with all $C_i$ projective. This gives that $Ext^{i+n}(Z_j(C), F) \simeq Ext^i(Z_{j+n}(C), F)$, so we obtain that $Ext^i(Z_{j+n}(C), F)=0$ for all j and for all $i \ge 1$. Then for $j \rightarrow j-n$ we obtain that $Ext^i(Z_j(C), F)=0$ for all $i \ge 1$. So $Hom(C,F)$ is exact for all flat modules $F$. In particular, $C$ is a totally acyclic complex of projective modules, so $Z_j(C)$ is Gorenstein projective for all j. The exact sequence $0 \rightarrow Z_j(C) \rightarrow Z_j(P_{n-1}) \rightarrow \ldots \rightarrow Z_j(P_0) \rightarrow Z_j(N) \rightarrow 0$ gives that $Gpd_R Z_j(N) \le n$ for all j.
\end{proof}

We can prove now:\\

\begin{theorem}
If $R$ is right coherent and left n-perfect then $(\mathcal{GP}, \mathcal{GP}^\bot)$ is a complete hereditary cotorsion pair.
\end{theorem}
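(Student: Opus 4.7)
The plan is to deduce Theorem 2 directly from Corollary 1. Corollary 1 delivers the complete hereditary cotorsion pair $(\mathcal{GP}, \mathcal{GP}^\bot)$ as soon as $R$ is left GF-closed, $\mathcal{GP} \subseteq \mathcal{GF}$, and every Gorenstein flat module has finite Gorenstein projective dimension. I would therefore organize the argument around verifying these three conditions under the standing hypothesis that $R$ is right coherent and left n-perfect.

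The first two conditions are classical consequences of right coherence. Over any right coherent ring the class $\mathcal{GF}$ is closed under extensions (so $R$ is left GF-closed), and every Gorenstein projective module is Gorenstein flat; both statements are due to Holm and can simply be quoted. This reduces the task to the third and decisive condition: every Gorenstein flat $R$-module has finite Gorenstein projective dimension.

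For this I would apply Proposition 2. Since $R$ is left GF-closed and left n-perfect, that proposition equates the desired statement with the vanishing $Ext^i(G, F) = 0$ for all Gorenstein flat $G$, all flat and cotorsion $F$, and all $i \ge 1$. Let $\mathbf{F}_\bullet$ be a complete flat resolution of such a $G$, so that $I \otimes \mathbf{F}_\bullet$ is exact for every injective right $R$-module $I$. Using the Pontryagin dual $(-)^+ = Hom_\mathbb{Z}(-, \mathbb{Q}/\mathbb{Z})$ together with the standard adjunction $Hom(M, N^+) \cong (N \otimes M)^+$, this exactness transports into exactness of $Hom(\mathbf{F}_\bullet, I^+)$; since all syzygies of $\mathbf{F}_\bullet$ are Gorenstein flat, we obtain $Ext^i(G, I^+) = 0$ for all $i \ge 1$.

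The step I expect to be the main obstacle is bridging from character modules $I^+$ of injective right modules to arbitrary flat and cotorsion left modules. The plan here is to appeal to Xu's structure theorem, which holds precisely because $R$ is right coherent, and asserts that every flat and cotorsion left $R$-module is a direct summand of $I^+$ for some injective right $R$-module $I$. Since $Ext^i(G,-)$ respects direct summands, the vanishing promotes to $Ext^i(G, F) = 0$ for all flat cotorsion $F$. Condition 3 of Proposition 2 is then satisfied, giving $Gpd_R G \le n$ for every Gorenstein flat $G$, and Corollary 1 completes the proof of Theorem 2.
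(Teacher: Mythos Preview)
Your argument is correct and follows the same overall strategy as the paper: verify the three hypotheses of Corollary~1, with the only nontrivial one being finiteness of Gorenstein projective dimension for Gorenstein flat modules, which you reduce via Proposition~2 to the vanishing $Ext^i(G,F)=0$ for $G$ Gorenstein flat and $F$ flat cotorsion.

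The one place where you and the paper differ is in how this last vanishing is established. You first show $Ext^i(G,I^+)=0$ for every injective right module $I$ directly from the complete flat resolution and the Hom--tensor adjunction (equivalently, from $Tor_i(I,G)=0$), and then invoke Xu's structure theorem to write any flat cotorsion $F$ as a direct summand of some $I^+$. The paper instead works with the specific pure exact sequence $0\to F\to F^{++}\to Y\to 0$: right coherence makes $F^{++}$ (and hence $Y$) flat, cotorsion of $F$ forces the sequence to split, and then $Ext^i(G,F^{++})\cong Ext^i(F^+,G^+)=0$ because $F^+$ is injective and $G^+$ is Gorenstein injective. In effect the paper proves inline exactly the instance of Xu's result that you quote, but finishes on the Gorenstein injective side rather than the Tor side. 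Your route is slightly more economical in that it avoids the detour through Gorenstein injectivity of $G^+$; the paper's route is more self-contained in that it does not import Xu's theorem.
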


\begin{proof}
It is known that every right coherent ring is left GF-closed (\cite{bennis:07:stronglygorenstein}). It is also known that if $R$ is right coherent and left n-perfect then every Gorenstein projective module is Gorenstein flat (\cite{christensen:06:gorenstein}). So in order to prove that a right coherent and left n-perfect ring satisfies Theorem 1 it suffices to check that every Gorenstein flat module has finite Gorenstein projective dimension. By Lemma 1 this is equivalent with showing that $Ext^i(G,F)=0$ for all $i \ge 1$, for any Gorenstein flat module $G$ and any flat and cotorsion module $F$.\\

Let $F$ be flat and cotorsion. Consider the pure exact sequence $$0 \rightarrow F \rightarrow F^{++} \rightarrow Y \rightarrow 0$$
Since $F$ is flat and $R$ is right coherent, the module $F^{++}$ is also flat. Since the sequence is pure exact it follows that $Y$ is also flat. Then since $F$ is cotorsion we have $Ext^1(Y,F)=0$, so $F^{++} \simeq F \oplus Y$, and therefore $Ext^i(G, F^{++}) \simeq Ext^i(G,F) \oplus Ext^i(G,Y)$ for all $i \ge 1$.\\
We have $Ext^i(G, F^{++}) \simeq Ext^i(F^+, G^+)$. For a Gorenstein flat module $G$ its character module $G^+$ is Gorenstein injective (by \cite{holm04}), so we have that $Ext^i(F^+, G^+)=0$ for all $i \ge 1$ (because $F^+$ is injective). Thus $Ext^i(G, F^{++})=0$ and therefore $Ext^i(G,F)=0$ for all $i \ge 1$.\\

So any right coherent and left n-perfect ring $R$ satisfies Theorem 1. Then by Corollary 1, $(\mathcal{GP}, \mathcal{GP}^\bot)$ is a complete hereditary cotorsion pair.
\end{proof}

\begin{remark}
We already proved in \cite{iacob:14:gor.proj.precovers} that the class $\mathcal{GP}$ is special precovering over any right coherent and left n-perfect ring $R$. For completeness, we included a different proof here.
\end{remark}

\section{examples of non coherent rings that are left GF-closed, such that $\mathcal{GP} \subseteq \mathcal{GF}$, and every Gorenstein flat has finite Gorenstein projective dimension}

We proved in the previous section that the class of right coherent and left n-perfect rings is contained in that of rings satisfying Theorem 1. We show that this inclusion is a strict one.\\

1) Consider the ring $R$ below. 

$\displaystyle R = \left[
                     \begin{array}{ccc}
                       \mathbb{Q} & \mathbb{Q} & \mathbb{R} \\
                       0 & \mathbb{Q} & \mathbb{R} \\
                       0 & 0 & \mathbb{Q} \\
                     \end{array}
                   \right] /
                   \left[
                     \begin{array}{ccc}
                       0 & 0 & \mathbb{R} \\
                       0 & 0 & 0 \\
                       0 & 0 & 0 \\
                     \end{array}
                   \right]$

                   Its Jacobson radical is \\

                   $\displaystyle J(R) = \left[
                     \begin{array}{ccc}
                       0 & \mathbb{Q} & \mathbb{R} \\
                       0 & 0 & \mathbb{R} \\
                       0 & 0 & 0\\
                     \end{array}
                   \right] /
                   \left[
                     \begin{array}{ccc}
                       0 & 0 & \mathbb{R} \\
                       0 & 0 & 0 \\
                       0 & 0 & 0 \\
                     \end{array}
                   \right]$

                   Then we have that\\

                   $\displaystyle \frac{R}{J(R)} \simeq \left[
                     \begin{array}{ccc}
                       \mathbb{Q} & \mathbb{Q} & \mathbb{R} \\
                       0 & \mathbb{Q} & \mathbb{R} \\
                       0 & 0 & \mathbb{Q} \\
                     \end{array}
                   \right] /
                   \left[
                     \begin{array}{ccc}
                       0 & \mathbb{Q} & \mathbb{R} \\
                       0 & 0 & \mathbb{R} \\
                       0 & 0 & 0 \\
                     \end{array}
                   \right]$

                   This is isomorphic to the ring of diagonal matrices with entries from $\mathbb{Q}$, so isomorphic to $\mathbb{Q} \times \mathbb{Q} \times \mathbb{Q}$ which is semisimple. Also $J(R)^2 = 0$. So $R$ is semiprimary.\\

                   Since $ K = \left[ \begin{array}{ccc}
                       0 & 0 & \mathbb{R} \\
                       0 & 0 & 0 \\
                       0 & 0 & 0 \\
                     \end{array}
                     \right]$

is a two sided ideal of the ring $A = \left[ \begin{array}{ccc}
                       \mathbb{Q} & \mathbb{Q} & \mathbb{R} \\
                       0 & \mathbb{Q} & \mathbb{R} \\
                       0 & 0 & \mathbb{Q} \\
                     \end{array}
                     \right]$

                     it follows that $gl.dim (R) = gl. dim. (A/K) \le 2$ (by \cite{harada:66:semi-primary}, Theorem 3).\\

  Since $R$ is semiprimary, it is perfect on both sides. By the above it also has finite global dimension. Since every Gorenstein flat has finite projective dimension, we have $Flat = \mathcal{GF}$. Similarly, $Proj = \mathcal{GP}$. Thus $Proj = Flat = \mathcal{GP} = \mathcal{GF}$. So $R$ is left GF-closed, and $\mathcal{GP} = \mathcal{GF}$ \\

The right ideal $I$ of $R$\\

$\displaystyle I = \left[
                     \begin{array}{ccc}
                       0 & \mathbb{Q} & \mathbb{R} \\
                       0 & 0 & 0 \\
                       0 & 0 & 0 \\
                     \end{array}
                   \right]/
                   \left[
                     \begin{array}{ccc}
                       0 & 0 & \mathbb{R} \\
                       0 & 0 & 0 \\
                       0 & 0 & 0 \\
                     \end{array}
                   \right]$

 is finitely generated (by the equivalence class of the matrix with $1$ in position $12$ and zeros everywhere else). \\
 Since the equivalence class of\\
 $ \left[ \begin{array}{ccc}
                       0 & 0 & 0 \\
                       0 & 0 & x \\
                       0 & 0 & 0 \\
                     \end{array}
                     \right]$

                     is in the annihilator of $I$ for any real number $x$, it follows that the annihilator of $I$ is not finitely generated (otherwise we obtain a contradiction: that $\mathbb{R}$ is finitely generated over $\mathbb{Q}$).

  So $R$ is not a right coherent ring (by \cite{lam:00:lectures}).\\

2)  Another example can be obtained by considering a two by two matrix triangular ring $S$ with the diagonal entries from a field $K$, and with the other non zero entries from a field $Q$ with $K \subseteq Q$ of left dimension $m < \infty$ but such that $Q$ has infinite right $K$-dimension. This triangular matrix ring is left hereditary and left perfect of finite global dimension. Using a modified version of Lam's argument (\cite{lam:00:lectures}, page 139) one can show that the annihilator of

$$\displaystyle \left[
                     \begin{array}{cc}
                       0 & 1 \\
                       0 & 0 \\
                     \end{array}
                   \right]$$

 is not a finitely generated right ideal. So the ring $S$ is not right coherent.


3) The previous examples can be used to construct more examples of left GF-closed rings that are not right coherent, and such that over these rings every Gorenstein projective is Gorenstein flat and every Gorenstein flat has finite Gorenstein projective dimension.\\

Let $R_1 $ be right coherent and left n-perfect and let $R$ be as in the example (1) above. Then let $\Gamma = R_1 \times R$. Since both $R_1$ and $R$ are left GF-closed, so is $\Gamma$ (by \cite{bennis:07:stronglygorenstein}).\\
- We show that $\mathcal{GP}(\Gamma) = \mathcal{GP}(R_1) \times \mathcal{GP}(R)$ \\
By \cite{bennis:07:stronglygorenstein}, any $\Gamma$-module $M$ is of the form $M_1 \oplus M_2$ with $M_1$ and $R_1$ module and $M_2$ an $R$-module. \\
Let $M_1$ be a Gorenstein projective $R_1$-module and let $M_2$ be an $R$- Gorenstein projective module. Then there are exact complexes of projective $R_1$, respectively, $R$-modules, $\textbf{P}_i = \ldots \rightarrow P_{1,i} \rightarrow P_{0,i} \rightarrow P_{-1,i} \rightarrow \ldots$ with $M_i = Ker (P_{1,i} \rightarrow P_{0,i})$. Then $\textbf{P} = \ldots \rightarrow P_1 \rightarrow P_0 \rightarrow P_{-1} \rightarrow \ldots$ is an exact complex with each $P_i = P_{1,i} \oplus P_{2,i}$ a projective $\Gamma$-module. \\
Let $Q = Q_1 \oplus Q_2$ be a projective $\Gamma$-module; then $Q_1$ is a projective $R_1$-module and $Q_2$ is a projective $R$-module. By \cite{bennis:07:stronglygorenstein} we have that 
$Hom(\textbf{P}, Q) \simeq Hom(\textbf{P}_1, Q_1) \oplus Hom(\textbf{P}_2, Q_2)$. So $Hom(\textbf{P}, Q) $ is an exact complex.\\

Also, let $M = M_1 \oplus M_2$ be a Gorenstein projective $\Gamma$-module. Then there exists an exact and $Hom(-, Proj)$ exact complex of $\Gamma$ projective modules $\textbf{P} = \ldots \rightarrow P_1 \rightarrow P_0 \rightarrow P_{-1} \rightarrow \ldots$ with $M = Ker(P_1 \rightarrow P_0)$.\\

Then, by \cite{bennis:07:stronglygorenstein}, $P_i = P_{1,i} \oplus P_{2,i}$ and $\textbf{P} = \textbf{P}_1 \oplus \textbf{P}_2$ with $\textbf{P}_i = \ldots \rightarrow P_{i,1} \rightarrow P_{i,0} \rightarrow \ldots$ an exact complex of projective modules with $M_i = ker (P_{i,1} \rightarrow P_{i,0})$.\\ Let $Q = Q_1 \oplus Q_2$ be a projective $\Gamma$ module. Then $Hom(\textbf{P}, Q) \simeq Hom(\textbf{P}_1, Q_1) \oplus Hom(\textbf{P}_2, Q_2)$. It follows that $M_1$ is a Gorenstein projective $R_1$-module and $M_2$ is an $R$-Gorenstein projective module.

By \cite{bennis:07:stronglygorenstein}, we have that $\mathcal{GF}(\Gamma) = \mathcal{GF}(R_1) \times \mathcal{GF}(R)$. \\
For any $M \in \mathcal{GP}(\Gamma)$ we have $M = M_1 \oplus M_2$ with $M_1 \in \mathcal{GP}(R_1)$ and $M_2 \in \mathcal{GP}(R)$. Then $M_1 \in \mathcal{GF}(R_1)$ and $M_2 \in \mathcal{GF}(R)$, so $M \in \mathcal{GF}(\Gamma)$.\\
Let $N$ be a Gorenstein flat $\Gamma$-module. Then $N = N_1 \oplus N_2$ with $N_1 \in \mathcal{GF}(R_1)$, and $N_2 \in \mathcal{GF}(R)$. Since $Gpd{_\Gamma} N \le sup \{Gpd_{R_1}(N_1), Gpd_R(N_2) \}$ and $Gpd_{R_1}(N_1) < \infty$ and  $Gpd_R(N_2) < \infty$, it follows that $Gpd {_\Gamma} N < \infty$

\textbf{Acknowledgement} We thank Dag Oskar Madsen and Pedro Guil Asensio for their help on some of the examples in section 4.

\vspace{7mm}

\textbf{Authors}:\\

Sergio Estrada, University of Murcia, Spain, sestrada@um.es\\
Alina Iacob, Georgia Southern University, USA, aiacob@georgiasouthern.edu\\
Katelyn Yeomans, Georgia Southern University, USA, ky00362@georgiasouthern.edu

\end{document}